\newcommand{\from}{\colon}
\newcommand{\E}{\mathrel{E}}
\renewcommand{\P}{\mathbb{P}}
\newcommand{\PP}{\textsc{\textbf{P}}}
\newcommand{\NP}{\textsc{\textbf{NP}}}
\newcommand{\ELEMENTARY}{\textsc{\textbf{ELEMENTARY}}}
\newcommand{\SPACE}{\textsc{\textbf{SPACE}}}
\newcommand{\PSPACE}{\textsc{\textbf{PSPACE}}}
\newcommand{\TIME}{\textsc{\textbf{TIME}}}
\newcommand{\powerset}{\mathcal{P}}
\newcommand{\pow}{\powerset}
\newcommand{\strings}{2^{< \omega}}
\newcommand{\langs}{\pow(\strings)}
\newcommand{\concat}{{}^{\smallfrown}}
\newcommand{\str}[1]{2^{#1}}
\renewcommand{\subset}{\subseteq}
\newcommand{\<}{\langle}
\renewcommand{\>}{\rangle}
\newcommand{\comp}[1]{{#1}^{\text{c}}} 
\newcommand{\restrict}{\restriction}
\newcommand{\cantor}{2^\omega}
\newcommand{\define}[1]{\emph{#1}}
\newcommand{\join}{\oplus}
\newcommand{\union}{\cup}
\newcommand{\biginters}{\bigcap}
\newcommand{\N}{\mathbb{N}}
\newcommand{\F}{\mathbb{F}}
\theoremstyle{plain}
\newtheorem{thm}{Theorem}[section]
\newtheorem{prop}[thm]{Proposition}
\newtheorem{cor}[thm]{Corollary}
\theoremstyle{definition}
\newtheorem{defn}[thm]{Definition}
\begin{document}

\author{Andrew Marks\thanks{The author was partially
supported by NSF grants DMS-1204907
and DMS-1500974 and the John Templeton Foundation under Award No.\ 15619.
This work forms part of the author's PhD thesis which was done under the
excellent supervision of Ted Slaman. The author would also like to thank
Jan Reimann and Alekos Kechris for helpful conversations, and the anonymous 
referees for many helpful comments and corrections}
}
\title{The universality of polynomial time Turing equivalence}

\date{\today}

\maketitle

\begin{abstract}
We show that polynomial time Turing equivalence and a large class of other equivalence relations from computational complexity theory are universal countable Borel equivalence relations. We then discuss ultrafilters on the invariant Borel sets of these equivalence relations which are related to Martin's ultrafilter on the Turing degrees.
\end{abstract}

\section{Introduction}

\subsection{Universal resource bounded equivalence relations from
computability}

In this paper, we study the global complexity of resource bounded 
reducibilities from computability theory. 
If $\mathcal{C}$ is
a subset of the Turing reductions such as the polynomial time Turing reductions,
and $x, y \in \langs$ are languages, then we write $x \leq_T^\mathcal{C} y$ if
$x$ is Turing reducible to $y$ via a Turing reduction in $\mathcal{C}$.
Similarly, if $\mathcal{C}$ is a subset of the many-one reductions, then we
analogously write $x \leq_m^\mathcal{C} y$ if $x$ is many-one
reducible to $y$ via a many-one reduction from $\mathcal{C}$.
Now these relations are not transitive for arbitrary $\mathcal{C}$. However, we will be
particularly interested in the special case where we fix a function $g
\from \N \to \N$ which is strictly increasing, time constructible, and
$g(n) \geq n^2$, and let $\mathcal{C}$ be the union over $k$ of all
reductions computable in $O(g^k)$-time or $O(g^k)$-space. Note that here we
use $g^k$ to denote the $k$-fold composition of $g$, $g^k = \underbrace{g
\circ \ldots \circ g}_{\text{k times}}$, and not its $k$-fold product. 
In this case, since there is only a polynomial amount of overhead required
to simulate the composition of two reductions, it is easy to see that the
associated reducibilities are transitive and symmetrize to equivalence
relations which we study below.

Previous investigations of the global properties of resource bounded 
reducibilities in computability have 
focused mainly on the
theory of these structures \cites{Ladner1975, MR1079470, MR1785026,
MR1255600}. We take a different approach, using the framework of
Borel reducibility between Borel equivalence relations. 
A \define{Borel equivalence relation} $E$ on a standard Borel space $X$ is an
equivalence relation that is a Borel subset of $X \times X$.
If $E$ and $F$ are both Borel
equivalence relations on $X$ and $Y$, then $E$ is \define{Borel
reducible} to $F$, noted $E \leq_B F$, if there is a Borel function $f
\from X \to Y$ such that for all $x, y \in X$, we have $x \mathrel{E} y
\iff f(x) \mathrel{F} f(y)$. Such a function induces an injection $\hat{f}
\from X/E \to Y/F$, and we can view Borel reducibility as comparing the
difficulty of classifying $E$ and $F$ by invariants, where if $E \leq_B F$,
then any complete set of invariants for $F$ can be used as a set of
complete invariants for $E$. 
Borel reducibility is also sometimes viewed as
describing \define{Borel cardinality}, where the injection $\hat{f}$ is a
Borel witness that the quotient of $E$ injects into the quotient of $F$.

A Borel equivalence relation is said to be \define{countable} if all of its
equivalence classes are countable. See \cite{MR1900547} for an introduction
to the theory of countable Borel equivalence relations. It is an important
fact that there are maximal countable Borel equivalence relations under 
$\leq_B$, and such equivalence relations are said to be
\define{universal} \cite{MR1149121}. Our first theorem is that the resource
bounded equivalence relations discussed above are universal
(see Theorem~\ref{poly_universal_2}). 

\begin{thm}\label{poly_universal}
Suppose $g(n) \geq n^2$ is a strictly increasing time-constructible function. Then any countable Borel equivalence relation $E$ such that \linebreak
$\equiv_m^{\TIME(\union_k O(g^k))} \subset E \subset
\equiv_T^{\SPACE(\union_k O(g^k))}$ is a universal countable Borel equivalence
relation.
\end{thm}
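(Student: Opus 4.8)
The plan is to show that a known universal countable Borel equivalence relation Borel-reduces into the "sandwich" of relations described. Since the equivalence relation $E$ sits between $\equiv_m^{\TIME(\cup_k O(g^k))}$ and $\equiv_T^{\SPACE(\cup_k O(g^k))}$, it suffices to do two things simultaneously: find a Borel reduction $f$ from a known universal countable Borel equivalence relation $E_\infty$ into the smaller relation (many-one time equivalence), and verify that the \emph{same} map $f$ is also a reduction \emph{out of} the larger relation (space Turing equivalence). That is, I want $x \mathrel{E_\infty} y \iff f(x) \equiv_m^{\TIME} f(y)$ and also $x \mathrel{E_\infty} y \iff f(x) \equiv_T^{\SPACE} f(y)$. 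Because any $E$ in the excerpt is squeezed between these two, the biconditional for $E$ follows automatically from these two, and $E$ inherits universality. The natural candidate for $E_\infty$ is the universal countable Borel equivalence relation $\Einfty$ coming from the shift action of $F_2$, or equivalently a suitable action encoded via the machinery set up earlier in the paper.

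The core technique I would use is a \emph{Martin-style coding} or "free part" construction adapted to the complexity setting: one encodes an orbit of a free Borel action of a countable group (or of $F_2$) into a language $f(x) \subset \strings$ in such a way that the group-orbit equivalence is faithfully recorded by the reducibility structure. Concretely, I would fix a free Borel action witnessing universality, choose a Borel way to list the generators and their inverses as "moves," and build $f(x)$ so that from $f(x)$ one can, in time/space bounded by some $O(g^k)$, recover enough local information to navigate to $f(y)$ for any $E_\infty$-related $y$, while keeping $E_\infty$-inequivalent points genuinely non-reducible. The key design constraint is that the encoding must be robust across the whole interval of reducibilities: the positive direction (related points reduce to each other) should be achievable many-one in $\TIME(\cup_k O(g^k))$, which is the strongest requirement, and the negative direction (unrelated points do not reduce) must hold even for the weakest relation $\equiv_T^{\SPACE(\cup_k O(g^k))}$, which is the most permissive.

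In detail, I would proceed in the following steps. First, recall or set up a universal countable Borel equivalence relation realized as the orbit equivalence relation of a free Borel action of $F_2$ on $\cantor$ (invoking that such relations are universal, as cited via \cite{MR1149121}). Second, define the Borel coding map $f \from \cantor \to \langs$ that writes the action data into a language; here I must ensure $f$ is Borel and that the local structure of the orbit is encoded \emph{sparsely and redundantly}, so that translating between $f(x)$ and $f(y)$ for orbit-equivalent $x,y$ costs only a bounded number of $g$-compositions. Third, verify the forward implication: if $x \mathrel{E_\infty} y$, then $f(x) \equiv_m^{\TIME(\cup_k O(g^k))} f(y)$, by exhibiting explicit many-one reductions that use the group word taking $x$ to $y$ together with the redundant coding to perform the translation within the time bound. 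Fourth, verify the backward implication at the level of the \emph{largest} relation: if $f(x) \equiv_T^{\SPACE(\cup_k O(g^k))} f(y)$, then $x \mathrel{E_\infty} y$; this is the crux, and I expect it to require a genericity or measure/category argument showing that a space-bounded Turing reduction cannot forge a connection between codes of distinct orbits. Finally, chain the two implications through any intermediate $E$ to conclude $E_\infty \leq_B E$, giving universality of $E$.

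The hardest part will be the backward, negative direction against the weakest reducibility $\equiv_T^{\SPACE(\cup_k O(g^k))}$: I must guarantee that no space-bounded (hence fairly powerful, non-uniform-looking) Turing reduction can map between $f(x)$ and $f(y)$ when $x$ and $y$ lie in different $E_\infty$-classes. The standard obstruction here is that Turing reductions, unlike many-one reductions, can query an oracle adaptively and many times, so the coding must be \emph{incompressible} or \emph{self-locating} enough that any reduction extracting $f(y)$ from $f(x)$ would implicitly compute the group element connecting the orbits — which is impossible when there is none. I anticipate that the resolution lies in a careful choice of a sparse, self-delimiting encoding together with an appeal to the freeness of the action and an argument (likely by contradiction, diagonalizing over the countably many reductions in $\SPACE(\cup_k O(g^k))$) that any such successful reduction on a non-null or non-meager set of codes would contradict freeness or produce an impossible uniformization. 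Making the two bounds meet in the middle — strong enough coding for the time-bounded positive direction, robust enough for the space-bounded negative direction — is precisely where the constraint $g(n) \geq n^2$ and time-constructibility of $g$ will be used to control the simulation overhead.
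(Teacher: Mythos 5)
Your overall strategy is the paper's: reduce the orbit equivalence relation $E_\infty$ of a continuous $\F_2$-action on $\cantor$ via a single map that is a homomorphism into $\equiv_m^{\TIME(\union_k O(g^k))}$ (the easy direction, built into the coding) and a cohomomorphism from $\equiv_T^{\SPACE(\union_k O(g^k))}$ (the crux), with the coding function taken sufficiently generic for a partial order of finite Lipschitz approximations; the sparse coding in which $\hat f(\gamma\cdot x)$ at length $n$ sits inside $\hat f(x)$ above length $g^{|\gamma|}(n)$ is also exactly what the paper does. The gap is in the one place you flag as hardest: you gesture at ``freeness of the action,'' ``incompressibility,'' and a contradiction on a non-null or non-meager set of codes, but none of these is the actual mechanism, and the step as you describe it would not go through. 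To force a fixed $g^k$-space reduction $\varphi_e$ to fail between $\hat f$ of \emph{every} pair of extensions $r^*,s^*$ of two incompatible basic neighborhoods, you must choose the bits of the coding at some level $n-1$ below $s$ so as to diagonalize simultaneously against every possible value of the oracle $\hat f(x)\restrict \str{\leq g^k(n)}$ consistent with the current condition --- and a priori there are exponentially many extensions $r^*$, each potentially presenting a different oracle to $\varphi_e$.

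The paper's key idea, absent from your proposal, is a counting argument showing this simultaneous diagonalization is feasible: because the coding is sparse and $g(n)\geq n^2$, only group elements $\gamma$ with $|\gamma|\leq k+\log_2\log_2(n)$ can contribute strings of length $\leq g^k(n)$ to $\hat f(x)$, so the relevant portion of the oracle is a subset of a fixed set $S$ of fewer than $n$ strings (after arranging $2^{j+1}4^{k+1}\log_2(n)^2 < n$). Hence there are at most $2^{n-1}$ candidate oracle restrictions $u_0,u_1,\ldots$, and one defeats all of them at once by putting the $l$th string of length $n-1$ into the code below $s$ if and only if $\varphi_e$ run relative to $u_l$ rejects the corresponding string. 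This calibration --- the number of possible oracles bounded by the number of available coding strings --- is what makes the diagonalizing sets $D_{r,s,k,e}$ dense and hence makes the cohomomorphism claim true for generic $f$; freeness of the action plays no role, only the incompatibility of $\gamma\cdot r$ with $s$ for all $|\gamma|\leq k$. Without this count, your plan identifies where the difficulty lies but does not contain the idea that resolves it.
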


For example, polynomial time many-one equivalence and polynomial time Turing
equivalence are both universal countable Borel equivalence relations,
corresponding to the case $g(n) = n^2$. It is an open question of
Kechris~\cite{MR1791302} whether Turing equivalence is a universal countable Borel
equivalence relation.

\subsection{Ultrafilters, universality, and relativization}

Suppose $E$ is a countable Borel equivalence relation on a standard Borel
space $X$. We say that a set $A \subset X$ is \define{$E$-invariant} if
$\forall x, y \in X$ if $x E y$, then $x \in A \iff y \in A$. Of course,
the Borel $E$-invariant sets form a $\sigma$-algebra. Ultrafilters on this
$\sigma$-algebra of invariant Borel sets are important tools used in the
study of Borel reducibility of equivalence relations. Such ultrafilters
arise naturally from $E$-ergodic probability measures on $X$ and
$E$-generically ergodic Polish topologies on $X$. More recently, work
started by Simon Thomas in \cite{MR2563815} and continued in
\cite{1109.1875} and \cite{MarksLong} has used ultrafilters related to
Martin's ultrafilter on the Turing degrees to derive structural
consequences about universal equivalence relations of various kinds. For
example, in \cite{1109.1875}, these ultrafilters from computability were
used to show that if $E$ is a universal countable Borel equivalence
relation on a standard Borel space $X$, and $A \subset X$ is Borel, then
either $E \restriction A$ is universal or $E \restriction (X \setminus A)$
is universal. This answered a question of Jackson, Kechris, and Louveau
\cite{MR1900547}. These ultrafilters which preserve the Borel cardinality
of their associated equivalence relations are promising candidates for
resolving other questions from the theory of countable Borel equivalence
relations known to be resistant to measure and category techniques. See
\cite{MarksLong} for an introduction to them. 

It is an interesting problem to find other natural examples of such
ultrafilters on countable Borel equivalence relations. For example,
\cite{ThomasWilliams} has investigated whether there could be a cone
measure for the quasiorder of embeddability among countable groups. Here,
we consider such ultrafilters for resource bounded equivalence relations
from computability theory. 

The ultrafilters we consider are defined via games very similar to the game
defining Martin's ultrafilter  on the Borel Turing invariant sets (see
\cite{MR0227022}). Martin considered the game $G_A$ where two players
alternate defining longer and longer initial segments of an oracle $x$, and
at the end, player I wins if the resulting oracle is in the payoff set $A$.
Martin has shown \cite{MR0227022} that player I has a winning strategy if
and only if $A$ contains a \define{Turing cone}, that is, a set of the form
$\{x : x \geq_T y\}$ for some oracle $y$. Similarly, player II has a
winning strategy if and only if the complement of $A$ contains a Turing
cone. Hence, by Borel determinacy \cite{MR0403976}, the set of Borel
Turing-invariant $A$ that contain a Turing cone forms an ultrafilter on the
Borel Turing-invariant sets, which is known as \define{Martin's
ultrafilter} or \define{Martin measure}.

It is folklore that Martin's proof
generalizes to any quasi-order $\leq_Q$ such that given a strategy $\sigma$
in the game
and an oracle $x$ which is $\geq_Q$ an encoding of $\sigma$, then $x \equiv_Q \sigma*x$
where $\sigma * x$ is the outcome of playing $\sigma$ against $x$. 
While
this condition encompasses a large number of quasi-orders, it does not
include Turing equivalence restricted to sub-exponential time or
space bounds, since strategies in games take an exponential amount of space
to encode. 
Nor does it include many-one equivalence, or its time or space restricted
versions, since even if $x \geq_m \sigma$ where $\sigma$ is a strategy in
Martin's game, a many-one reduction relative to $x$ cannot combine
information about $x$ and this strategy to determine $\sigma * x$. Here then, we may ask to what extent
there can be an analogue of Martin's ultrafilter for these equivalence
relations. 

First, we note that standard relativization barriers show that there cannot
be an ultrafilter defined via cones for these equivalence relations. For example, relativizing the
theorem of Baker, Gill, and Solovay \cite{MR0395311} giving oracles
$x$ and $y$ relative to which 
$\PP^x = \NP^x$ and $\PP^y \neq \NP^y$ shows that both the oracles relative
to which $\PP = \NP$ and $\PP \neq \NP$ are cofinal under polynomial time
Turing reducibility $\leq_T^{\PP}$, and so
there is no cone ultrafilter on the $\equiv_T^{\PP}$-invariant sets.
Further, there can be no cone ultrafilter for $\leq_m$ since the collection
of oracles that are 
many-one equivalent to their complement are cofinal under $\leq_m$ but do not contain
a $\leq_m$-cone. 
However, it turns out that Martin's game still defines an
ultrafilter with interesting structure-preserving properties, even though
it does not have a definition via cones. 

\begin{thm}\label{ultrafilter}
  Let $E$ be a countable Borel equivalence relation as in
  Theorem~\ref{poly_universal}. Let $G_A$ be Martin's game in
  \cite{MR0227022} where the two players $I$ and $II$ alternate defining
  the elements of an oracle $x$ and $I$ wins if $x \in A$. Then the
  collection of $E$-invariant Borel sets $A$ such that player $I$ has a
  winning strategy in the game $G_A$ is an ultrafilter on the Borel
  $E$-invariant sets. Furthermore, for any $A \in U$, $E \restrict A$ is a
  universal countable Borel equivalence relation.
\end{thm}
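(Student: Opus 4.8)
The plan is to split the statement into three tasks: that $U$ is a proper filter, that it is complemented on the invariant Borel sets (hence an ultrafilter together with the filter property), and the ``furthermore'' that $E \restrict A$ is universal for every $A \in U$. Two inputs run throughout. The first is Borel determinacy \cite{MR0403976}: since $A$ is Borel, $G_A$ is determined, so exactly one of $I$, $II$ has a winning strategy. The second is the invariance built into the hypothesis $\equiv_m^{\TIME(\union_k O(g^k))} \subset E$: recoding an oracle by any bijection of $\strings$ computable within the bound preserves its $E$-class, and in particular prepending finitely many bits (a one-move shift of the play) does not change which $E$-invariant set the oracle lies in.

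That $U$ is proper and upward closed is immediate: a strategy forcing $x \in A$ also forces $x$ into any superset of $A$, and no strategy can force $x \in \emptyset$. For complementation I would run the usual Martin strategy-stealing argument, now justified by shift-invariance rather than by cones. Given a winning strategy for $II$ in $G_A$, I build one for $I$ in $G_{A^c}$: player $I$ makes an arbitrary opening move and thereafter copies $II$'s strategy, reading the opponent's moves as the inputs that strategy expects. The oracle so produced differs from a genuine $II$-run only by the single prepended move, hence is $E$-equivalent to it and lies in the same member of the partition $\{A, A^c\}$. The symmetric construction turns an $I$-winning strategy for $G_C$ into a $II$-winning strategy for $G_{C^c}$. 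With determinacy these give $A \in U \iff A^c \notin U$; combined with closure under intersection (next) this makes $U$ an ultrafilter.

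Closure under intersection is the first place the absence of cones bites. Given winning strategies $\alpha,\beta$ for $I$ in $G_A$ and $G_B$, the cone proof would have $I$ code oracles $y_A,y_B$ below which $A,B$ sit and then force $x$ above $y_A \join y_B$; here no such $y_A$ need exist. The naive replacement---split $I$'s positions into two infinite computable halves and run $\alpha$ and $\beta$ separately---also fails, since without a cone characterization a winning strategy need not survive when $I$'s moves are thinned. The plan is instead to combine $\alpha$ and $\beta$ on a single oracle, dovetailing so that the produced $x$ decodes, via a within-bound recoding, to plays of $G_A$ and $G_B$ that invariance forces into $A$ and into $B$ respectively. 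Guaranteeing that the single oracle simultaneously lies in both sets, given that $\alpha,\beta$ are arbitrary (hence costly) functions, is the delicate point.

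The furthermore is where the exponential cost of strategies, flagged before the statement, is the central obstacle, and it is the step I expect to be hardest. Since $E$ is universal by Theorem~\ref{poly_universal}, it suffices to Borel reduce $\Einfty$ into $E \restrict A$. The natural idea uses a winning strategy $\sigma$ for $I$ in $G_A$: compose the universality reduction of Theorem~\ref{poly_universal} with the map $z \mapsto \sigma * z$, letting $II$'s moves code the image oracle, so that every output lies in $A$ because $\sigma$ wins. The difficulty is that $\sigma$, being an arbitrary strategy, costs exponential space to evaluate, so $\sigma * z$ is in general not $E$-equivalent to $z$ and the reduction is not transported for free. The plan to circumvent this is to note that the reduction of $\Einfty$ into $E$ uses only within-bound computation, so it relativizes to $\sigma$ and reduces $\Einfty$ into the $\sigma$-relativized equivalence relation, which playing $II$'s coding moves against $\sigma$ realizes inside $A$. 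That distinct $\Einfty$-classes stay $E$-inequivalent should follow because $II$'s code is recoverable from $\sigma * z$ by a within-bound reduction, so $z \leq_E \sigma * z$ uniformly; verifying that this suffices, and that same-class codes yield $E$-equivalent outputs, is the technical heart of the argument.
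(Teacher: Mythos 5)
Your overall architecture and all three key ideas coincide with the paper's proof: complementation by swapping the players' roles and absorbing the resulting recoding of the join into the $\equiv_m^{\PP}$-invariance of $A$; intersections by running the given strategies side by side on a single oracle; and universality of $E \restrict A$ by threading a winning strategy $\sigma$ through the construction of Theorem~\ref{poly_universal}. The one place you go genuinely wrong is in rejecting the ``split player I's positions into blocks'' approach for intersections: that is exactly what the paper does, and it does not fail. One first reassigns the join so that in $G_{A_i}$ player I controls the strings beginning with $0^{i+1} \concat 1$ and player II controls everything else; since this join is polynomial time many-one equivalent to the canonical one and $A_i$ is invariant, player I still wins this variant game, say by $\sigma_i'$. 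In the original game player I then plays every $\sigma_i'$ simultaneously on its own block, treating all remaining strings --- including those filled in by the other $\sigma_j'$ --- as opponent moves. A winning strategy beats every opponent play, so there is no circularity, and the single outcome lies in each $A_i$; this gives $\sigma$-completeness, not merely closure under finite intersections. Your worry about ``thinning'' is answered by the same invariance you already invoke for complementation: player I wins the thinned game because its outcomes are within-bound recodings of outcomes of the canonical game.

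On the furthermore, note that literally post-composing the reduction of Theorem~\ref{poly_universal} with $z \mapsto \sigma * z$ breaks the homomorphism direction as well as the cohomomorphism direction, since one would need $\sigma * z$ to be reducible to $z$ within the bound, not just conversely. The paper's repair, which is essentially the one you sketch, is to put $\sigma$ inside the recursion, defining $\hat{f}(x) = \sigma * ( f(x) \join c ( \cdots ) )$. Because player II's half of $\sigma * y$ is $y$ itself, each $\hat{f}(\gamma \cdot x)$ with $|\gamma| = 1$ remains $O(g)$-time computable from $\hat{f}(x)$, so $\hat{f}$ is still a homomorphism; and the density argument for the sets $D_{r,s,k,e}$ survives because the forcing conditions still control player II's half of each value while $\sigma$'s half is a function of it, so the count of possible restrictions of $\hat{q}(r^*)$ to strings of length at most $g^k(n)$ is unaffected and the diagonalization on player II's strings goes through verbatim. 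So your proposal is the right approach, with the two verifications you flag as delicate left open exactly where the paper supplies them, and with the intersection step needing the correction above rather than a new idea.
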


There is some connection here with issues in computability and complexity
theory surrounding the phenomenon of relativization. In recursion theory,
proofs almost always relativize. That is, we can take most any proof in the
subject, change all Turing machines used in the proof to grant them access
to an additional oracle $x$, and the proof will remain valid. And while
early sweeping attempts to formalize this phenomenon failed, such as
Rogers' homogeneity conjecture (see \cite{MR0224462}*{\S 13.1},
\cite{MR0186554}*{\S 12},
\cite{MR543312}, and \cite{MR644748}), Martin's
ultrafilter on the Turing degrees yields a somewhat weaker explanation of
the ubiquity of this phenomenon. 
If $\psi(\textbf{x})$ is a Borel property of a Turing degrees, then either $\psi(\textbf{x})$ is true
on a cone of oracles, or $\psi(\textbf{x})$ is false on a cone of oracles.
Hence, every reasonable fact
about the Turing degrees eventually relativizes above some Turing cone. 

As we've noted above, Baker, Gill, and Solovay's result implies that the
polynomial time analogue of this result is false. However,
since Martin's cone theorem becomes true once we enlarge beyond
iterated exponential time Turing equivalence, we
can show that the connection between sets in our ultrafilter and cones
(and hence the meta-theorem preventing the existence of relativization
barriers on a cone) occurs once $E$ grows beyond iterated exponential time
Turing equivalence. That is, 
if $E$ contains $\equiv_T^{\ELEMENTARY}$, then $A
\in U$ if and only if $A$ contains a $\leq_T^{\ELEMENTARY}$-cone.
This fact tells us something interesting about
relativization barriers themselves -- that on a cone, such barriers can
always be found inside the exponential hierarchy. Suppose $\psi$ is a
$\equiv_m^{\PP}$ invariant property (such as whether two naturally defined
relativized complexity classes coincide). Suppose also
that there is a relativization barrier for $\psi$ \emph{which itself
relativizes},
so that for every $z$, there are $x \geq_m^{\PP} z$ and $y \geq_m^{\PP} z$
relative to which $\psi(x)$ is true, and $\psi(y)$ is false. Then for a
$\leq_m^{\PP}$-cone of $z$, we can find such $x$ and $y$ so that $x,y \in
\ELEMENTARY^z$. A consequence of this is that if $\psi$ is invariant on
$\equiv_T^{\ELEMENTARY}$-classes, then it must be either true or
false on some $\leq_T^{\ELEMENTARY}$-cone, and hence $\psi$ can
not admit a Baker-Gill-Solovay-type relativization barrier.

In another direction, we show that 
the set of languages $x \in \langs$ for which $\PP^x = \NP^x$ is in the
ultrafilter $U$ defined in Theorem~\ref{ultrafilter}. While
oracles relative to which $\PP =
\NP$ are often considered to be somehow ``rare'' (for example
meager \cite{BI87} and Lebesgue null \cite{MR605606}) here we have a 
natural sense in which the set of oracles relative to which $\PP = \NP$
is large--it has the maximal possible Borel cardinality. 

\section{Preliminaries}
\label{sec:prelims}

We begin by reviewing some notation and conventions. Given any set $S$, we
will often exploit the bijection via characteristic functions between its
powerset $\pow(S)$, and $2^S$, the space of functions from $S$ to $\{0,1\}$, and move freely between these two representations. We
use the notation $2^n$ for the set of finite binary strings of length $n$
and $2^{\leq n}$ for finite binary strings of length $\leq n$. The set of
all finite strings is noted $2^{< \omega}$. We use $r \concat s$ to note
the concatenation of the strings $r$ and $s$.

Define $\langs$ to be the Polish space of subsets of $\strings$. If $x, y \in \langs$, then their recursive
join is $x \join y = \{0 \concat s : s \in x\} \union \{1 \concat s: s \in
y\}$. The recursive join of finitely many elements of $\langs$ is defined
similarly. We use the notation $\comp{A}$ to denote the complement of a set
$A$. A function $g \from \N \to \N$ is said to be \define{time
constructible} if $g(n)$ is computable in $O(g(n))$-time.

In computational complexity, the issue of correctly relativizing and
modeling oracle access is a delicate and complicated matter.
See \cite{Fortnow} for a discussion of some of these issues. The theorems we
will prove, however, will be quite robust with respect to this issue, and
the only assumption we will make is the standard convention that a
computation which uses $n$ space may query only oracle strings of length at
most $n$. 

If $E$ and $F$ are equivalence relations on the spaces $X$ and $Y$, then a
function $f \from X \to Y$ is a \define{homomorphism from $E$ to $F$} if
for all $x, y \in X$ we have $x
\E y \implies f(x) \mathrel{F} f(y)$. A function $f \from X \to Y$ is a
\define{cohomomorphism from $E$ to $F$} if for all $x, y \in X$ we have
$f(x) \mathrel{F} f(y) \implies x \E y$.

\section{A universality proof}

In this section, we prove the following theorem from the introduction.

\begin{thm}\label{poly_universal_2}
Suppose $g(n) \geq n^2$ is a strictly increasing time-constructible function. 
Then any countable Borel equivalence relation $E$ such that \linebreak
$\equiv_m^{\TIME(\union_k O(g^k))} \subset E \subset
\equiv_T^{\SPACE(\union_k O(g^k))}$ is universal.
\end{thm}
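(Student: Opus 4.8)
The plan is to prove universality of $E$ by exhibiting a Borel reduction to $E$ from a fixed universal countable Borel equivalence relation, for which I take $\Einfty$, the orbit equivalence relation of the shift action of the free group $F_2$ on $2^{F_2}$ (restricted to its free part, which remains universal, so that each orbit is a copy of the Cayley graph of $F_2$, a $4$-regular tree, labelled by $x$). Because $E$ is sandwiched as $\equiv_m^{\TIME(\union_k O(g^k))} \subset E \subset \equiv_T^{\SPACE(\union_k O(g^k))}$, it suffices to build a single Borel map $f \from 2^{F_2} \to \langs$ that is simultaneously a homomorphism from $\Einfty$ to the smaller relation $\equiv_m^{\TIME(\union_k O(g^k))}$ and a cohomomorphism from $\Einfty$ to the larger relation $\equiv_T^{\SPACE(\union_k O(g^k))}$. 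Indeed, if $x \Einfty y$ then $f(x) \equiv_m^{\TIME(\union_k O(g^k))} f(y)$ and hence $f(x) \mathrel{E} f(y)$; conversely, if $f(x) \mathrel{E} f(y)$ then $f(x) \equiv_T^{\SPACE(\union_k O(g^k))} f(y)$ and hence $x \Einfty y$. Thus $f$ witnesses $\Einfty \leq_B E$, and since $\Einfty$ is universal, so is $E$.

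For the coding I would fix a map $a \from F_2 \to \strings$ assigning to each reduced word $\gamma$ an ``address'' string, and set $f(x) = \{a(\gamma) : x(\gamma) = 1\}$, together with a fixed block of self-describing navigation data recording the local Cayley-graph structure around each coded vertex. This $f$ is continuous, hence Borel. The homomorphism direction is the easy half: if $y = \delta \cdot x$ for $\delta \in F_2$, then $y(\gamma) = x(\delta^{-1}\gamma)$, so the map sending $a(\gamma)$ to $a(\delta^{-1}\gamma)$ (and every non-address to a fixed non-element) is a many-one reduction of $f(y)$ to $f(x)$. Choosing $a$ so that addresses and left translation by a fixed group element are computable with only polynomial overhead in the input length — which is possible since $g(n) \geq n^2$ is time-constructible, leaving ample room to simulate free reduction of words — this reduction runs in $\TIME(O(g^k))$ for some $k = k(\delta)$, and symmetrically in the other direction. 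Taking the union over $\delta$ gives $f(x) \equiv_m^{\TIME(\union_k O(g^k))} f(y)$.

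The cohomomorphism direction is the crux, and the place where the space bound on the upper relation is essential: I must show that $f(x) \equiv_T^{\SPACE(\union_k O(g^k))} f(y)$ forces $x$ and $y$ into the same $F_2$-orbit. The mechanism I would use is locality. By the standard oracle convention, a $\SPACE(O(g^k))$ computation on an input of length $n$ can query its oracle only at strings of length $O(g^k(n))$; tuning the growth rate of the addresses $a(\gamma)$ against $g$, this confines any such reduction, when computing the coded bit at a vertex $\gamma$, to reading $x$ only on a ball about $\gamma$ in the Cayley graph of $F_2$ whose radius is controlled by $k$ and $|\gamma|$. A reduction realizing $f(y) \leq_T^{\SPACE(\union_k O(g^k))} f(x)$ then amounts to a bounded-radius block code carrying the labelled orbit of $x$ onto that of $y$; since the shift action of $F_2$ is free, a combinatorial compactness argument should show that such a code between two points can only be a genuine group translation, so that $y = \delta \cdot x$ for some $\delta$.

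The main obstacle is exactly this rigidity argument, and in particular the tension it creates with the homomorphism. The homomorphism wants the addresses $a(\gamma)$ to grow slowly, so that left translation stays within $\TIME(\union_k O(g^k))$, while the locality confinement wants them to grow fast, so as to defeat every space-$g^k$ Turing functional simultaneously. The heart of the proof is to find an addressing scheme — together with enough self-describing navigation data that a space-bounded machine can actually locate and verify the coded ball it is reading — that threads this needle for all $k$ at once. Making the locality estimate precise, checking that the navigation data cannot be exploited to manufacture a spurious equivalence between distinct orbits, and carrying out the compactness argument that promotes a bounded-radius code to an honest group translation are where I expect the real work to lie.
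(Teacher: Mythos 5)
Your overall architecture --- reduce from the shift action of $\F_2$, and exploit the sandwich by building one map that is a homomorphism to the small (many-one, time-bounded) relation and a cohomomorphism to the large (Turing, space-bounded) relation --- is exactly the paper's, and your homomorphism half is essentially correct. The gap is in the cohomomorphism half, and it is not merely that the ``real work'' remains: the approach as stated fails. A \emph{fixed, deterministic} coding of the form $f(x) = \{a(\gamma) : x(\gamma) = 1\}$ cannot be a cohomomorphism to $\equiv_T^{\SPACE(\union_k O(g^k))}$. Concretely, let $y = \bar{x}$ be the pointwise bit-flip of $x$. For generic $x$, $\bar{x}$ is not in the $\F_2$-orbit of $x$ (the orbit consists only of translates), yet $f(\bar{x})$ and $f(x)$ are interreducible by a trivial linear-space Turing reduction that queries the oracle at the given address and negates the answer. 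More generally, any locally invertible sliding-block code (e.g.\ $y(\gamma) = x(\gamma) \oplus x(\gamma\alpha)$-type rules) produces points in distinct orbits whose images are space-bounded Turing equivalent. So the rigidity statement you are hoping for --- that a bounded-radius code between labelled orbits must be a group translation --- is false, and no compactness argument will rescue it. Locality in the sense of oracle-query length also does not confine the reduction to a ball in the Cayley graph: the short oracle strings available to a space-$g^k$ machine already encode an entire (exponentially large in $k$) neighborhood, which the machine may combine arbitrarily.

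The missing idea is that the labelling must be chosen \emph{generically}, by a Baire-category argument over a poset of finite Lipschitz approximations $p \from 2^n \to \pow(2^{\leq n})$, precisely so as to diagonalize against every candidate reduction. The paper fixes the recursive self-coding
$\hat{f}(x) = f(x) \join c\bigl( \hat{f}(\alpha \cdot x) \join \hat{f}(\alpha^{-1} \cdot x) \join \hat{f}(\beta \cdot x) \join \hat{f}(\beta^{-1} \cdot x) \bigr)$
with $c(r) = 0^{g(|r|)} \concat 1 \concat r$ (this gives your homomorphism), and then, for each space-$g^k$ functional $\varphi_e$ and each pair of cylinders $r,s$ separated at distance $> k$ in the orbit, proves that the set of conditions forcing $\varphi_e$ to fail as a reduction of $\hat{f}(s^*)$ to $\hat{f}(r^*)$ is dense. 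The density proof is a counting argument that is the real engine of the theorem: because the coding $c$ is so sparse, the restriction of $\hat{f}(r^*)$ to strings of length $\leq g^k(n)$ can take fewer than $2^{n-1}$ possible values, so the $2^{n-1}$ strings of length $n-1$ freely assignable in $f(s^*)$ suffice to defeat $\varphi_e$ against every one of those possibilities simultaneously. Your proposal contains neither the genericity nor this counting step, and without them the construction is refuted by the bit-flip example above.
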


\begin{proof}

We let $E_\infty$ note a universal countable Borel equivalence relation
that is generated by a continuous action of the free group on two
generators $\F_2 = \<\alpha,\beta\>$ on $\cantor$.
For example, we can use the shift action of $\F_2$ for the purpose (see
\cite{MR1149121}). We will show that
$\E$ is universal by constructing a continuous reduction $\hat{f}$ from
$E_\infty$ to $\E$.

The key to our proof is that given $\gamma \in \F_2$, we can code
$\hat{f}(\gamma
\cdot x)$ into $\hat{f}(x)$ rather sparsely so that if $|\gamma| = k$ is
the length of $\gamma$ as a reduced word, then
strings of length $n$ in $\hat{f}(\gamma \cdot x)$ are coded by strings of
length greater than $g^{k}(n)$ in $\hat{f}(x)$. From here our basic idea is
as follows: given $x, y \in \cantor$ such that $x
\mathrel{\cancel{E}_\infty} y$ and
a Turing reduction that runs in $g^k$-time, we wait till we have finite
initial segments of $x$ and $y$ witnessing that $\gamma \cdot x \neq y$ for any
$\gamma$ of length $\leq k$. Then if $n$ is large enough, we can change the
value of strings of length $n$ in $\hat{f}(x)$ without changing
$\hat{f}(y)$ restricted to strings of length $\leq g^k(n)$. This makes it
easy to diagonalize. The remaining technical wrinkle of the proof is that
we must be able to simultaneously do a lot of this sort of diagonalization.
This proof is inspired by the proof of Theorem 2.5 in \cite{1109.1875}.

We now give a precise definition of the coding we will use. Let $c:
\strings \to \strings$ be the function where $c(r) = 0^{g(|r|)}\concat 1
\concat r$ is $g(r)$ zeroes followed by a $1$ followed by $r$.
It is clear that if $x \in \langs$, then $c(x) =
\{c(r) : r \in x\}$ is many-one reducible to $x$ in $O(g)$-time, since $g$
is time constructible.

Given $f: \cantor \to \langs$, let $\hat{f}: \cantor \to \langs$ be the
unique function satisfying
\[\hat{f}(x) = f(x) \join c \left( \hat{f}(\alpha \cdot x) \join
\hat{f}(\alpha^{-1} \cdot x) \join \hat{f}(\beta \cdot x) \join
\hat{f}(\beta^{-1} \cdot
x) \right).\] 
While this definition of $\hat{f}$ is self-referential, note that if $y$ is
a language, and $s$ is a string in $y$, then $s$ is coded by a string of 
a greater length in $f(x) \join c \left( y \right)$. Hence, for each string $s$,
there is a unique end-segment $r_s$ of $s$ and a word $\gamma_s \in \F_2$ so
that $s$ is in $\hat{f}(x)$ if and only if $r_s$ is in $f(\gamma_s \cdot
x)$. So $\hat{f}$ is uniquely defined.

Now it is clear that given any $f$, the associated $\hat{f}$
is a homomorphism from $E_\infty$ to $\E$, since $\equiv_m^{\TIME(\union_k O(g^k))}
\subset E$. We claim that if $f$ is a
sufficiently generic continuous function, then $\hat{f}$ will also be a
cohomomorphism from $E_\infty$ to $\equiv_T^{\SPACE(\union_k O(g^k))}$ and hence
also a cohomomorphism from $E_\infty$ to $\E$. By generic here, we mean for the
following partial order for constructing a 
continuous (indeed, Lipschitz) function from $\cantor$ to $\langs$.
Our partial order $\P$ will consist of functions $p: 2^n \to \pow(\str{\leq n})$
such that if $m < n$ and $r_1, r_2 \in 2^n$ extend $r \in 2^m$, then
$p(r_1)$ and $p(r_2)$ agree on all strings of length $\leq m$. Given a $p : 2^n \to \pow(\str{\leq n})$ 
and $r \in 2^n$, we will often
think of $p(r)$ as a function from $\str{\leq n}$ to $2$.
If $p$ has domain $2^n$, then we will say $p$ has height $n$. If $p,p^* \in
\P$ are such that the height of $p$ is $m$ and the height of $p^*$ is $n$,
and $m \leq n$, then
say that $p^*$ extends $p$, noted $p^* \leq_{\P} p$, if for all $r^* \in
2^n$ extending $r \in 2^m$, we have that $p^*(r^*)$ extends $p(r)$ (as
functions).

If $p \in \P$, then we can define $\hat{p}$ analogously to the definition
of $\hat{f}$ above. In particular, for each $\gamma \in \F_2$, we have some
partial information about $p(\gamma \cdot r)$ based on the longest finite
initial segment of $\gamma \cdot r$ that we know (recall that the action of
$\F_2$ on $\cantor$ is continuous). Hence, given a finite string $r$,
$\hat{p}$ will map $r$ to a partial function from finite strings to $2$
that amalgamates all this partial information.

Because of our coding scheme, if $r \in \strings$, the length of $r$ is
$|r| = n$, and $\gamma \in \F_2$ is of length $k$, then whether $r \in
\hat{f}(\gamma \cdot x)$ is canonically coded into $\hat{f}(x)$ at some string
of length greater than $g^k(n)$.

Let $p_0$ be the condition of height $1$ where $p_0(r) = \emptyset$ for all
$r$. Hence, if $f: \cantor \to \langs$ extends $p_0$, then every string in $f(x)$
must have length $\geq 2$. For convenience, the generic function we
construct will extend $p_0$. 
Note that since $g(n) \geq n^2$, if $f$ is $O(g^{k-1})$ for some $k$, then
$f(n) \leq g(n)^k$ for sufficiently large $n$. 

Suppose we are given a Turing reduction $\varphi_e$ that runs in $g^k$-space, and $r,s \in 2^m$ such that $\gamma \cdot r$ is incompatible
with $s$ for all $\gamma \in \F_2$ where $|\gamma| \leq k$.
Let $D_{r,s,k,e}$ be the
set of $p$ of height $\geq m$ such that if $p$ has height $i$, then there
exists an $n \geq m$ so that if $t \in 2^i$, then $\hat{p}(t)$ is defined on all
strings of length $\leq g^k(n)$, and for all $r^*, s^* \in 2^i$ extending
$r$ and $s$, we have that $\varphi_e$ is not a $g^k$-time reduction of
$\hat{p}(s^*)$ to $\hat{p}(r^*)$ as witnessed by some string $t$ of length
$n$, so that $\varphi_e(\hat{p}(r^*))(t) \neq \hat{p}(s^*)(t)$.
We claim that $D_{r,s,k,e}$ is dense below $p_0$. The theorem will follow
from this fact.

Given any $p \leq_{\P} p_0$ where $p$ has height $j$, we must construct an
extension $p^*$ of $p$ so that $p^* \in D_{r,s,k,e}$. Fix an $n$ and an $i$
such that $i \gg n \gg j$ with the precise bounds we give below.
Define $q$ of height $i$ where for all $r$, $q(r)$
contains only the strings in $p(r \restrict j)$. We require that $2^{j+1}
4^{k+1} \log_2(n)^2 < n$, and that $i \gg n$ is large enough so that
for all $r \in 2^i$, $\hat{q}(r)$ is defined on all strings of
length $\leq g^k(n)$. Such an $i$ exists by the continuity of the action of
$\F_2$ on $\cantor$.

Given any $r^* \in 2^i$ we compute an upper bound on how many elements
$\hat{q}(r^*)$ could have of length $\leq g^k(n)$. Clearly every $q(t)$ has
less than $2^{j+1}$ elements. It will be enough to establish an upper bound
on the number of words $\gamma \in \F_2$ so that some element of $q(\gamma
\cdot r^*)$ is coded into $\hat{q}(r)$ via a string of length $\leq
g^k(n)$.

Since $q$ extends $p_0$, any element of any $q(\gamma \cdot r^*)$ must be a
string of length $\geq 2$. Now if $\gamma \in \F_2$ is such that some string of
length $\geq 2$ in $q(\gamma \cdot r^*)$ is coded into $\hat{q}(r^*)$ below
$g^k(n)$, then it must be that $g^{|\gamma|}(2) \leq g^k(n)$. Since $g(n)
\geq n^2$, we have that $g^{i}(2) \geq 2^{2^{i}}$. Hence, 
$|\gamma| \leq 
k + \log_2\log_2(n)$. This is because otherwise 
\[g^{|\gamma|}(2) \geq g^{k + \lceil \log_2
\log_2(n) \rceil}(2) = g^{k} \circ g^{\lceil \log_2 \log_2(n)\rceil}(2)
\geq g^k(n)\]
and furthermore $g^{|\gamma|}(2) > g^{k + \lceil \log_2
\log_2(n) \rceil}$ if $\log_2 \log_2 (n)$ is an integer, and $g^{k + \lceil \log_2 \log_2(n)\rceil}(2) >
g^k(n)$ if $\log_2 \log_2(n)$ is not an integer, contradicting $g^{|\gamma|}(2) \leq g^k(n)$
either way. 
Thus, there are most $4^{k+1} \log_2(n)^2$ such group elements $\gamma$
since there are at most 
$4^{l+1}$ words of $\F_2$ of length $\leq l$. 

Since each $q(\gamma \cdot r^*)$ must have less than $2^{j+1}$ elements,
$\hat{q}(r^*)$ contains at most $2^{j+1} 4^{k+1} \log_2(n)^2$ strings of length $\leq
g^k(n)$. Let $S$ be this set of all possible strings in $\hat{q}(r^*)$ of
length $\leq g^k(n)$, so
we have picked $n$ so that $|S| < n$. Note that $S$ does not depend on
$r^*$.

We see now that amongst all of the $r^*$ extending $r$, there are 
$\leq 2^{n-1}$ possibilities for what any $\hat{q}(r^*)$ restricted to
strings of length $\leq g^k(n)$ could be: they are all
elements of $\pow(S)$. Let $u_0, u_1, \ldots$ be a listing of the elements
of $\pow(S)$. Recall that based on our definition of recursive join and
$\hat{p}$, strings of length $n-1$ in $p(s^*)$ are coded into
$\hat{p}(s^*)$ using strings of length $n$ that begin with $0$. Define $p^*$
to be equal to $q$ except on extensions $s^*$ of $s$. There, if $\sigma$ is
the
$l$th string of length $n-1$, then put $\sigma \in p^*(s^*)$ if and only if
$\sigma$ is not accepted by $\varphi_e$ run relative to $u_l$. We then have
that $p^* \in D_{r,s,k,e}$ as desired. Recall here that $\gamma \cdot r^*$ is incompatible
with $s^*$ for all $\gamma \in \F_2$ where $|\gamma| \leq k$, since $r^*$
and $s^*$ extend $r$ and $s$ respectively.
\end{proof}

\section{Relativization barriers and largeness notions for sets of
polynomial time degrees}

Martin's ultrafilter is defined by a game where two players alternate
defining the bits of a real, as follows: 

\begin{defn}\label{def_ultrafilter}
  Let $U$ be the collection of $\equiv_m^{\PP}$-invariant Borel sets $A \subset \langs$ such that player I
  has a winning strategy in the following game $G_A$. Players I and II alternate
  defining which strings are in a language $x$ where on turn $n$, player I
  decides membership in $x$ for all strings of length $n$ beginning with a
  $0$, and on turn $n$, player II decides membership in $x$ for all strings
  of length $n$ beginning with a $1$, so $x$ is the recursive join of the
  languages played by I and II. Then player I wins the game if $x \in A$.
\end{defn}

We now show that $U$ is a $\sigma$-complete ultrafilter for
$\equiv_m^{\PP}$-invariant sets. 

\begin{prop}
  $U$ is a $\sigma$-complete ultrafilter on the $\equiv_m^{\PP}$-invariant
  Borel
  sets. 
\end{prop}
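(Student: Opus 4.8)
The plan is to establish the three defining properties of a $\sigma$-complete ultrafilter — that $U$ is a proper filter, that it is an ultrafilter (for every invariant Borel $A$, exactly one of $A$ or $\comp{A}$ lies in $U$), and that it is closed under countable intersections — using Borel determinacy and the winning-strategy formulation of the game $G_A$. The underlying observation throughout is that membership of $A$ in $U$ is defined by the existence of a winning strategy for player I, so all the set-theoretic closure properties should be reduced to manipulations of strategies.

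First I would verify the ultrafilter dichotomy. Since $A$ is $\equiv_m^{\PP}$-invariant and Borel, the payoff set in $G_A$ is Borel, so by Borel determinacy \cite{MR0403976} the game is determined: either player I or player II has a winning strategy. The game $G_{\comp{A}}$ is the same game with the roles reversed in the sense that player I wins $G_{\comp{A}}$ exactly when $x \in \comp{A}$. The key point is that the two players are \emph{symmetric} in this game up to a swap of the roles of the leading bit $0$ versus $1$: any winning strategy for player II in $G_A$ translates into a winning strategy for player I in $G_{\comp{A}}$ and vice versa, by composing with the $\equiv_m^{\PP}$-preserving map that swaps the $0$-part and $1$-part of the recursive join (which is an $\equiv_m^{\PP}$-automorphism and hence preserves membership in the invariant set $A$). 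Thus player II wins $G_A$ iff player I wins $G_{\comp{A}}$, and determinacy then yields that exactly one of $A, \comp{A}$ belongs to $U$. This also shows $\langs \in U$ and $\emptyset \notin U$, so $U$ is proper.

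The main work, and the part I expect to be the principal obstacle, is $\sigma$-completeness: given $A_0, A_1, \ldots \in U$ with witnessing winning strategies $\sigma_0, \sigma_1, \ldots$ for player I, I must produce a single winning strategy for player I in $G_{\bigcap_n A_n}$. This is the standard ``recombination of strategies'' argument adapted to Martin's game. The idea is to have player I simultaneously simulate all the $\sigma_n$ by playing on disjoint blocks of coordinates: partition the move rounds (or the strings by length via a fixed recursive pairing) so that the $n$th strategy $\sigma_n$ is fed an auxiliary play on its own block, and arrange that the real $x$ actually produced is $\equiv_m^{\PP}$-equivalent to each of the reals $x_n$ generated against $\sigma_n$. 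Because each $A_n$ is $\equiv_m^{\PP}$-invariant and each $x_n$ lies in $A_n$ (as $\sigma_n$ is winning), $x$ then lies in every $A_n$, hence in the intersection. The delicate point is that the recombination must be carried out \emph{within the polynomial-time many-one degree}, which is much more restrictive than in Martin's original Turing-degree setting; I would exploit the fact that the recursive join construction used to define the game already packages countably many plays in a way that each coordinate block is recoverable by a simple many-one reduction, so that $x \equiv_m^{\PP} x_n$ holds for each $n$. Closure under supersets (the remaining filter axiom) is immediate, since enlarging the payoff set only helps player I: a winning strategy for $G_A$ is automatically winning for $G_B$ whenever $A \subset B$ and $B$ is invariant.

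Assembling these, $U$ contains $\langs$, omits $\emptyset$, is upward closed and closed under countable (hence finite) intersection, and satisfies the ultrafilter dichotomy, which is exactly the assertion that $U$ is a $\sigma$-complete ultrafilter on the $\equiv_m^{\PP}$-invariant Borel sets.
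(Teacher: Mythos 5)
Your proposal is correct and follows essentially the same route as the paper: the complementation property via Borel determinacy together with the invariance of $A$ under the polynomial-time many-one equivalence that swaps (or more generally re-partitions) the blocks of the recursive join, and $\sigma$-completeness by running the countably many winning strategies simultaneously on disjoint blocks of strings assigned to player I. The paper phrases the recombination as a change in the definition of the recursive join (giving $\sigma_i$ the strings beginning with $0^{i+1}\concat 1$), which is exactly your block decomposition, so the two arguments coincide.
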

\begin{proof}
  Because $A$ is $\equiv_m^\PP$-invariant, it is easy to check that for any
  definition of a recursive join $\oplus$ such that the map $(x,y) \mapsto
  x \oplus y$ is polynomial time many-one equivalent to our canonical
  definition in Section~\ref{sec:prelims}, the winner of the game does not
  change. Hence, we can see that $A \in U$ iff $\comp{A} \notin U$ by
  simply switching the roles of players I and II in $G_A$. 
  
  Similarly, suppose we are given countably many strategies $\sigma_0, \sigma_1, \ldots$
  for player I in the games $G_{A_0}, G_{A_1}, \ldots$ respectively. By changing the definition of recursive join, we may assume that in the game
  $G_{A_i}$, player I decides membership in $x$ for all 
  string which begin with the string $0^{i+1} \concat 1$ of $i+1$ zeroes
  followed by a one, and player II decides membership in $x$ for all other
  strings. Now we can use the strategies $\sigma_0, \sigma_1,
  \ldots$ simultaneously to give a strategy for player I in the game where
  player I decides $x$ on all strings beginning with $0$. This
  combined strategy shows that player I wins  the game with payoff set
  $\biginters_i A_i$. Hence, $U$ is $\sigma$-complete.
\end{proof}

Next, an easy extension of Theorem~\ref{poly_universal_2} shows the
following:

\begin{thm}\label{ultrafilter_2}
  Let $E$ be a countable Borel equivalence relation as in
  Theorem~\ref{poly_universal_2}. Then Definition~\ref{def_ultrafilter}
  defines an ultrafilter on the Borel $E$-invariant sets such that for any
  $A \in U$, $E \restrict A$ is a universal countable Borel equivalence
  relation.
\end{thm}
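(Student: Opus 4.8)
The plan is to dispatch the two assertions separately. The ultrafilter property is inherited from the Proposition, and the universality of $E \restrict A$ comes from running the construction of Theorem~\ref{poly_universal_2} with player $I$'s half of the coding slaved to a fixed winning strategy.

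First, observe that $\equiv_m^\PP \subset E$: because $g(n) \geq n^2$ we have $g^k(n) \geq n^{2^k}$, so any polynomial-time many-one reduction runs in $O(g^k)$-time once $2^k$ exceeds its exponent, whence $\equiv_m^\PP \subset \equiv_m^{\TIME(\union_k O(g^k))} \subset E$. Consequently every $E$-invariant Borel set is $\equiv_m^\PP$-invariant, and the family of $E$-invariant Borel $A$ for which player $I$ wins $G_A$ is exactly the restriction of the ultrafilter $U$ of Definition~\ref{def_ultrafilter} to the sub-$\sigma$-algebra of $E$-invariant sets. The proof of the Proposition used only $\equiv_m^\PP$-invariance of the payoff set (to exchange the roles of the two players and to reparametrize the recursive join), so it applies verbatim and shows this restriction is again a (indeed $\sigma$-complete) ultrafilter.

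Now fix $A \in U$ together with a winning strategy $\sigma$ for player $I$ in $G_A$. I would rebuild the reduction $\hat f$ of Theorem~\ref{poly_universal_2}, but reorganize the coding so that all freely chosen data — the generic values $f(x)$ and the coded neighbor values $c(\hat f(\alpha \cdot x) \join \cdots)$ — is placed among the strings beginning with $1$, that is, in player $II$'s half of the board, while the strings beginning with $0$, player $I$'s half, are filled in level by level by the moves $\sigma$ prescribes. Formally $\hat f(x) \restrict (\leq n)$ is defined by simultaneous recursion on $n$ over all $x \in \cantor$: player $II$'s level-$n$ move is read off from the new generic data and the (sparsely coded, hence lower-level) neighbor data, and player $I$'s level-$n$ move is $\sigma$ applied to the position $\hat f(x) \restrict (<n)$. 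Since $\sigma$ wins against every play of player $II$, each $\hat f(x)$ is a won run and hence lies in $A$, and as $A$ is $E$-invariant the range of $\hat f$ meets only classes inside $A$. The neighbor coding is untouched, so $\hat f$ is still a homomorphism from $E_\infty$ to $E$.

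The cohomomorphism is obtained, as before, by showing the analogues of the sets $D_{r,s,k,e}$ are dense, and this is where the one real obstacle appears: player $I$'s strategy now contributes strings beginning with $0$ throughout the window of lengths $\leq g^k(n)$, and a priori these could be exponentially numerous, destroying the count that kept the number of possible oracle patterns below $2^{n-1}$. The resolution is that player $I$'s bits are not free but are \emph{determined} by $\sigma$ from the lower player $II$ bits, so by induction $\hat q(r^*) \restrict (\leq g^k(n))$ is a function of its player $II$ part alone; that part is still coded sparsely and, exactly as in the original count, carries at most $2^{j+1} 4^{k+1} \log_2(n)^2 < n$ strings, leaving at most $2^{n-1}$ oracle patterns $u_l$. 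We then diagonalize by setting the free $f(s^*)$-bits at the top level (now living among the strings beginning with $1$) to disagree with $\varphi_e$ run relative to each $u_l$; the only bookkeeping change is a constant additive shift in the levels caused by the extra wrapper bits, which is absorbed by taking $n \gg j$ large enough. Flipping such a bit reshuffles the player $I$ bits of $\hat f(s^*)$ through $\sigma$, but because $x \mathrel{\cancel{E}_\infty} y$ forces $r^*$ and $s^*$ into different $E_\infty$-orbits it leaves the low-level player $II$ data of $\hat f(r^*)$, and hence all of $\hat f(r^*) \restrict (\leq g^k(n))$, unchanged — so $\varphi_e$ reads a fixed oracle and the diagonalization succeeds. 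Finally, since a strategy is merely a function on the countable tree of finite positions, each finite initial segment of $\hat f(x)$ depends on only finitely much of $x$; thus $\hat f$ is in fact a continuous reduction of $E_\infty$ to $E \restrict A$, proving the latter universal.
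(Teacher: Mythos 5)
Your proposal is correct and takes essentially the same route as the paper: the paper likewise replaces the coding by $\hat f(x) = \sigma * \bigl( f(x) \join c(\cdots) \bigr)$, so that the freely chosen data occupies player II's half of the board while $\sigma$ fills in player I's half, and concludes that the range lands in $A$ because $\sigma$ is winning. Your observation that the counting in the density argument survives because player I's contribution is a function of the sparse player II data is precisely the content of the ``trivial modification'' the paper asserts without detail, so you have if anything supplied a step the paper leaves implicit.
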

\begin{proof}
  Given a winning strategy $\sigma$ for player I in
  the game $G_A$, and any $y \in \langs$, let $\sigma * y$ be the outcome of
  playing the strategy $\sigma$ against player II playing $y$. Then 
  if we replace the definition of $\hat{f}$ in the proof
  with 
\[\hat{f}(x) = \sigma * \left( f(x) \join c \left( \hat{f}(\alpha \cdot x) \join
\hat{f}(\alpha^{-1} \cdot x) \join \hat{f}(\beta \cdot x) \join
\hat{f}(\beta^{-1} \cdot
x) \right) \right) \] 
then $\hat{f}$ remains a homomorphism of $E_\infty$ into $E$. This is
because for all
$x$, $\hat{f}(\alpha \cdot x)$, $\hat{f}(\alpha^{-1} \cdot x)$,
$\hat{f}(\beta \cdot x)$, and $\hat{f}(\beta^{-1} \cdot x)$ are coded in
$\hat{f}(x)$ so that they are $O(g)$-time computable from $\hat{f}(x)$.
Hence, by induction, if $\gamma$ is a word of length $k$, then
$\hat{f}(\gamma \cdot x)$ is $O(g^k)$-time computable from $\hat{f}(x)$.

Furthermore,
a trivial modification of the proof of Theorem~\ref{poly_universal_2} shows that with
this new definition of $\hat{f}$ and the associated $\hat{p}$, the sets
$D_{r,s,k,e}$ are still dense and hence 
if $f$
is a sufficiently generic continuous function, then $\hat{f}$ will be a
Borel
reduction. 
Finally, $\hat{f}$ must be
an embedding whose range is contained in the set $A$, since $\sigma$ is a
winning strategy for player I in $G_A$.
\end{proof}

Now an easy argument essentially repeating Gill, Baker, and Solovay's proof
shows that if $A = \{x : \PP^x = \NP^x\}$, then there is a winning strategy
for player I in the game $G_A$; player I simply ensures that the outcome of
the game is a language that is polynomial time equivalent to its $\PSPACE$
completion. Thus, we have the following corollary

\begin{cor}
  Let $A = \{x : \PP^x = \NP^x\}$. Then $\equiv_m^{\PP} \restrict A$ and
  $\equiv_T^{\PP} \restrict A$ are universal countable Borel equivalence
  relations.
\end{cor}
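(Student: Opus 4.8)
The plan is to reduce the Corollary to Theorem~\ref{ultrafilter_2} by verifying that $A = \{x : \PP^x = \NP^x\}$ belongs to the ultrafilter $U$ of Definition~\ref{def_ultrafilter}. First I would check the two ambient hypotheses. Taking $g(n) = n^2$, both $\equiv_m^{\PP}$ and $\equiv_T^{\PP}$ lie between $\equiv_m^{\TIME(\union_k O(g^k))}$ and $\equiv_T^{\SPACE(\union_k O(g^k))}$, so they are equivalence relations of the form treated in Theorem~\ref{poly_universal_2}. I would also confirm that $A$ is a legitimate payoff set, that is, Borel and $\equiv_m^{\PP}$-invariant: the condition $\PP^x = \NP^x$ is arithmetical in $x$ and hence Borel, and if $x \equiv_m^{\PP} y$ then every oracle query to $x$ can be answered by applying the polynomial-time many-one reduction and querying $y$ instead, giving $\PP^x = \PP^y$ and $\NP^x = \NP^y$, so $x \in A \iff y \in A$. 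Granting $A \in U$, Theorem~\ref{ultrafilter_2} immediately yields that $E \restrict A$ is a universal countable Borel equivalence relation for every admissible $E$, and in particular for $\equiv_m^{\PP}$ and $\equiv_T^{\PP}$.

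The heart of the argument is producing a winning strategy for player~I in $G_A$, which I would do by a relativized Baker--Gill--Solovay construction \cite{MR0395311}. Let $\mathrm{TQBF}^x$ denote the canonical $\PSPACE^x$-complete language of true quantified Boolean formulas augmented with oracle gates querying $x$. Player~I's goal is to force the outcome $x$ to satisfy $\mathrm{TQBF}^x \leq_m^{\PP} x$. Fix a polynomial-time padding $\phi \mapsto s_\phi$ sending each formula $\phi$ to a distinct string $s_\phi$ beginning with $0$ (hence under player~I's control) whose length $N(\phi)$ strictly exceeds the space bound $s(\phi)$ used in evaluating $\phi$ relative to an oracle. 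On her turn $N$, player~I decides each length-$N$ string $t$ beginning with $0$ as follows: if $t = s_\phi$ for some $\phi$, she puts $t \in x$ exactly when $\mathrm{TQBF}^x(\phi)$ is true, and otherwise she keeps $t$ out of $x$. By construction $s_\phi \in x \iff \phi \in \mathrm{TQBF}^x$, so $\mathrm{TQBF}^x \leq_m^{\PP} x$; since trivially $x \leq_m^{\PP} \mathrm{TQBF}^x$ (map a string to the formula that merely queries it), the outcome $x$ is polynomial-time equivalent to its $\PSPACE$ completion. By $\PSPACE^x$-completeness of $\mathrm{TQBF}^x$ it follows that $\PSPACE^x \subseteq \PP^x$, and combined with the standard inclusions $\PP^x \subseteq \NP^x \subseteq \PSPACE^x$ this gives $\PP^x = \NP^x$, i.e.\ $x \in A$, for every play of player~II. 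Hence this is a winning strategy and $A \in U$.

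The step I expect to require the most care is checking that player~I's strategy is actually well defined, that is, that at turn $N(\phi)$ she possesses enough of $x$ to evaluate $\mathrm{TQBF}^x(\phi)$. This is exactly where the convention that a space-$s$ computation queries only oracle strings of length $\leq s$ is used: evaluating $\phi$ queries only strings of length $\leq s(\phi) < N(\phi)$, all of which were fixed on earlier turns (by either player), so the answer is determined before $s_\phi$ must be decided. The same inequality $N(\phi) > s(\phi)$ rules out any circularity when one formula's evaluation queries the encoding string $s_{\phi'}$ of another formula: one argues by induction on string length that every encoding string already carries the correct value by the turn on which it is played. Once this well-foundedness is verified, the reduction $\phi \mapsto s_\phi$ is manifestly polynomial time and correct, and the remaining complexity-class inclusions are routine.
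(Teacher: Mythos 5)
Your proposal is correct and follows essentially the same route as the paper: the paper's (sketched) proof likewise establishes $A \in U$ by having player~I force the outcome to be polynomial-time equivalent to its $\PSPACE$ completion via a Baker--Gill--Solovay-style coding, and then invokes Theorem~\ref{ultrafilter_2}. Your write-up merely supplies the details (the padding $\phi \mapsto s_\phi$, the well-foundedness of the strategy via the space-bounded query convention, and the invariance and Borelness of $A$) that the paper leaves implicit.
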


We finish by noting that the ultrafilter $U$ in
Definition~\ref{def_ultrafilter} can be combined with
Theorem~\ref{ultrafilter_2} to provide an alternate 
way of
proving many of the results in section 3 of \cite{1109.1875};
one simply uses them to replace Martin's measure and arithmetic
equivalence in that paper. 

\bibliography{references}

\end{document}